\newcommand{\mc}[1]{\mathcal{#1}}
\newcounter{i}
\theoremstyle{plain}
\newtheorem{thm}{Theorem}[section]
\newtheorem{lem}[thm]{Lemma}
\noindent \emph{Proof.} {}{#1}{}}{\hfill
\theoremstyle{plain} % just in case the style had changed
\newcommand{\thistheoremname}{}
\newtheorem{genericthm}{\thistheoremname}
\theoremstyle{definition}
\newtheorem{definition}[thm]{Definition}
\DeclareMathOperator{\on}{\mc{O}{\rm n}}
\DeclareMathOperator{\off}{\mc{O}{\rm ff}}
\DeclareMathOperator{\hleft}{\mc{L}}
\DeclareMathOperator{\hright}{\mc{R}}
\title{A Short Proof of the Existence of $K_q^r$-absorbers}
\author{
Michelle Delcourt
}
\address{Department of Mathematics, Toronto Metropolitan University (formerly named Ryerson University), Toronto, Ontario M5B 2K3, Canada}
\email{mdelcourt@torontomu.ca}
\thanks{Delcourt's research supported by NSERC under Discovery Grant No.\ 2019-04269.}
\author{
Tom Kelly
}
\address{
School of Mathematics, Georgia Institute of Technology, Atlanta, GA 30332, USA
}
\email{tom.kelly@gatech.edu}
\thanks{Kelly's research supported by the National Science Foundation under Grant No.\ DMS-2247078.}
\author{
Luke Postle
}
\address{Combinatorics and Optimization Department, University of Waterloo, Waterloo, Ontario N2L 3G1, Canada}
\email{lpostle@uwaterloo.ca}
\thanks{Postle's research partially supported by NSERC under Discovery Grant No.\ 2019-04304.}
\begin{document}

\maketitle

\begin{abstract} 
We codify a short self-contained proof of the existence of $K_q^r$-absorbers implicit in Keevash's~\cite{K14} original proof of the Existence Conjecture. Combining this with the work of the first and third authors in~\cite{DPI} yields a proof of the Existence Conjecture for Combinatorial Designs that is not reliant on the construction of $K_q^r$-absorbers by Glock, K\"uhn, Lo, and Osthus \cite{GKLO16}.
\end{abstract}

\section{Introduction}

The famous Existence Conjecture for Combinatorial Designs from the 1800s posits that $(n,q,r,\lambda)$-designs exist whenever the necessary divisibility conditions are satisfied provided $n$ is large enough. In 2014, Keevash~\cite{K14} proved the Existence Conjecture using \emph{randomized algebraic constructions}. Thereafter in 2016, Glock, K\"{u}hn, Lo, and Osthus~\cite{GKLO16} gave a purely combinatorial proof of the Existence Conjecture via \emph{iterative absorption}. In 2024, the first and third authors~\cite{DPI} gave a third shorter one-step combinatorial proof of the Existence Conjecture via \emph{refined absorption} {\bf assuming the existence of $K_q^r$-absorbers}. The existence of $K_q^2$-absorbers was first proved by Barber, K\"{u}hn, Lo, and Osthus~\cite{BKLO16}. For $r\ge 3$, the existence of $K_q^r$-absorbers was first proved by Glock, K\"{u}hn, Lo, and Osthus~\cite{GKLO16} and was essential for their proof of the Existence Conjecture. Indeed, their proof of the existence of $K_q^r$-absorbers uses induction on the uniformity $r$ and relies on the Existence Conjecture holding for smaller uniformities.  
Consequently, their absorbers also lack quantitative properties useful for variants and strengthenings of the Existence Conjecture which $K_q^2$-absorber constructions are known to have (see~\cite{GKLMO19} and~\cite{DKPIII}). Here in this paper we provide a short proof of the existence of $K_q^r$-absorbers as follows.  

If $F$ is an $r$-graph, an \emph{$F$-decomposition} of an $r$-graph $G$ is a partition of the edges of $G$ into copies of $F$. The complete $r$-graph on $q$ vertices, denoted $K_q^r$, is the $r$-graph with vertex set $[q]$ and edge set $\binom{[q]}{r}$. An $(n,q,r)$-Steiner system is equivalent to a $K_q^r$-decomposition of $K_n^r$. A necessary condition for an $r$-graph $G$ to admit a $K_q^r$-decomposition is that $G$ is \emph{$K_q^r$-divisible}, that is, $\binom{q-i}{r-i}~|~|\{e\in G: S\subseteq e\}|$ for all $0\le i \le r-1$ and $S\subseteq V(G)$ with $|S|=i$.

\begin{definition}[Absorber]
Let $L$ be a $K_q^r$-divisible $r$-graph. An $r$-graph $A$ is a \emph{$K_q^r$-absorber} for $L$ if $V(L)\subseteq V(A)$ is independent in $A$ and both $A$ and $L\cup A$ admit $K_q^r$-decompositions.
\end{definition}

We note this is the definition of absorber used by Barber, K\"uhn, Lo, Osthus~\cite{BKLO16} and also in the paper ``Minimalist Designs" by Barber, Glock, K\"uhn, Lo, Montgomery, and Osthus~\cite{BGKLMO} which differs from the definition of absorber given in Glock, K\"uhn, Lo, and Osthus~\cite{GKLO16} and the survey paper of Glock, K\"uhn, and Osthus~\cite{GKO}. Those papers only require $A$ to be edge-disjoint from $L$ but it is natural to require the stronger property that $V(L)$ is independent in $A$ since this is useful for embedding purposes (as done in~\cite{GKLO16, DPI}). Nonetheless, the existence of $K_q^r$-absorbers with the stronger independence definition also follows from the main absorbing lemma of Glock, K\"uhn, Lo, and Osthus~\cite[Lemma 8.2]{GKLO16}.

\begin{thm}[Glock, K\"uhn, Lo, and Osthus~\cite{GKLO16}]\label{thm:AbsorberExistence}
Let $q > r\ge 1$ be integers. If $L$ is a $K_q^r$-divisible $r$-graph, then there exists a $K_q^r$-absorber for $L$.
\end{thm}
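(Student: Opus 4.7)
My plan is to construct $K_q^r$-absorbers via \emph{transformers}. Define an $r$-graph $T$ to be a transformer from $L_1$ to $L_2$ (with $V(L_1) \cap V(L_2) = \emptyset$) if $V(L_1) \cup V(L_2) \subseteq V(T)$ is independent in $T$ and both $T \cup L_1$ and $T \cup L_2$ admit $K_q^r$-decompositions. The elementary but crucial reduction is the following: if $T$ is a transformer from $L$ to some $K_q^r$-decomposable $L'$, then $A := T \cup L'$ is a $K_q^r$-absorber for $L$. Indeed, $V(L) \subseteq V(A)$ is independent (it is independent in $T$ and disjoint from $V(L')$); $A = T \cup L'$ admits a $K_q^r$-decomposition directly from the transformer property; and $L \cup A = (L \cup T) \cup L'$ admits one by pasting a decomposition of $L \cup T$ with one of $L'$. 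This reduces \cref{thm:AbsorberExistence} to producing, for every $K_q^r$-divisible $L$, a $K_q^r$-decomposable $L'$ together with a transformer from $L$ to $L'$.

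To build the transformer I would work edge-by-edge. For each edge $e$ of $L$, attach a private gadget on $V(e)$ together with $O(q)$ fresh vertices, designed so that (i) together with $e$ it admits a $K_q^r$-decomposition via a copy of $K_q^r$ covering $V(e) \cup U_e$ for a fresh auxiliary set $U_e$ of size $q-r$, and (ii) together with a ``mirror'' configuration on entirely fresh vertices it admits an alternative $K_q^r$-decomposition. Aggregating these gadgets across all edges of $L$ yields a candidate transformer $T$, while the mirror pieces aggregate into the target $L'$. The mirror pieces should be engineered to be disjoint copies of $K_q^r$ (or some other manifestly $K_q^r$-decomposable building blocks), so that $L'$ decomposes automatically.

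The main obstacle is the precise design of the local gadget: since a single edge $e$ is not $K_q^r$-divisible, the gadget must carry the entire divisibility defect of $e$ into the auxiliary vertices, and the mirror half must match this profile exactly so that the two simultaneous decompositions both exist. I expect the cleanest formulation is a \emph{universal transformer lemma} stating that for any two $K_q^r$-divisible $r$-graphs $L_1, L_2$ on disjoint vertex sets with $|E(L_1)| = |E(L_2)|$, a transformer from $L_1$ to $L_2$ exists; the theorem would then follow by choosing $L_2$ to be a disjoint union of $K_q^r$'s with the correct edge count. Establishing this lemma in turn reduces to showing that one can perform local ``swaps'' that replace a small set of edges of $L_1$ by edges of $L_2$ while preserving $K_q^r$-divisibility, and iterating such swaps until $L_1 = L_2$. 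The core combinatorial content is verifying the existence of such divisibility-preserving swap gadgets for every $q > r$---a finite, explicit check that crucially does not invoke induction on the uniformity $r$ or the Existence Conjecture at smaller uniformities, and thus should deliver the desired self-contained proof.
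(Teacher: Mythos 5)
Your opening reduction---that a transformer from $L$ to a $K_q^r$-decomposable $L'$ yields the absorber $A:=T\cup L'$---is correct, and it is exactly the strategy of Glock, K\"uhn, Lo, and Osthus. But everything after that is a plan rather than a proof, and the one piece you do specify concretely is impossible. Suppose a per-edge gadget $G_e$ had both properties you ask for. From a $K_q^r$-decomposition of $G_e\cup\{e\}$, every $S\subsetneq e$ with $|S|=i\le r-1$ satisfies $\deg_{G_e}(S)\equiv -1 \pmod{\binom{q-i}{r-i}}$; from a $K_q^r$-decomposition of $G_e$ together with a mirror configuration on entirely fresh vertices (which contributes nothing to the degree of $S$), the same $S$ satisfies $\deg_{G_e}(S)\equiv 0 \pmod{\binom{q-i}{r-i}}$. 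Since $q>r$ gives $\binom{q-i}{r-i}\ge 2$ for $i\le r-1$, these congruences are contradictory, so no such gadget exists. The point is that the divisibility defect of a single edge sits on the subsets of $e$, which lie in $V(L)$ and must remain independent in the absorber; it can only be cancelled against \emph{other} edges of $L$ sharing those subsets, so the construction is inherently non-local. Your fallback ``universal transformer lemma'' together with ``divisibility-preserving swap gadgets'' is then essentially a restatement of the theorem: producing those swaps for all $q>r$ is precisely the hard content, and it is exactly the step where the original GKLO argument resorts to induction on $r$ and the Existence Conjecture at lower uniformities, which you say you want to avoid but for which you offer no substitute.

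For contrast, the paper handles this non-locality by first taking an \emph{integral} $K_q^r$-decomposition of $L$ (Graver--Jurkat, Wilson), which records globally how the defects of the edges of $L$ cancel against each other, and then realizing the signs combinatorially: explicit boosters inside $K_{q*n}^r$ constructed from Cauchy matrices over $\mathbb{F}_n$ (Lemma~\ref{lem:Booster}), a layering argument upgrading them to orthogonal boosters (Lemma~\ref{lem:OrthBooster}), and independent hinges (Lemma~\ref{lem:IndHinge}) that, via the matchings $M_f$ over all $r$-sets $f$, reroute each $r$-set between a positive and a negative clique of the integral decomposition. If you want to rescue your route, you need some device playing the role of the integral decomposition (or of GKLO's chain of transformers to a canonical graph); the finite, purely local check you are hoping for does not exist at the level of single edges.
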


Here we extract a short proof of Theorem~\ref{thm:AbsorberExistence} that was implicit in Keevash's original proof of the Existence Conjecture~\cite{K14}. That said, our proof differs from Keevash's original in that it sidesteps the use of randomized algebraic constructions by using a layering technique for constructing an \emph{orthogonal booster} from a \emph{booster} (see Definition~\ref{def:Booster} below) that we previously used for constructing ``spread boosters" and ``girth boosters" in~\cite{DKPIV,DPII}. The layered construction of orthogonal boosters has better degeneracy and density properties than the randomized algebraic one. We should mention that Keevash in his most recent proof of the Existence Conjecture~\cite{K24} independently adopted this layering technique for orthogonal booster construction (see the proof of~\cite[Lemma 3.1]{K24}) in his construction of an omni-absorber, presumably for its better numeric properties but also to avoid the use of randomized algebraic constructions. Nevertheless, we think codifying this short proof on its own is useful for the community but additionally will prove useful for future applications on variants of the Existence Conjecture.

\section{Proof}\label{s:Proof}

In the proof, we use the convention of identifying a hypergraph with its edge set; that is, for an $r$-graph $G$, we use $G$ to denote its edge set and $V(G)$ to denote its vertex set.  
We also call an $r$-graph $S$ which is isomorphic to $K^r_q$ a \textit{$K^r_q$-clique}.
For a $K_q^r$-decomposition $\mc{Q}$ of an $r$-graph $G$ and an edge $e\in G$, we use $\mc{Q}[e]$ to denote the (unique) $K_q^r$-clique in $\mc{Q}$ containing $e$.

\begin{definition}[Booster]\label{def:Booster}
A \emph{$K_q^r$-booster} for a $K_q^r$-clique $S$ is an $r$-graph $B$ such that $V(B)\subseteq V(S)$ is independent in $S$, $B$ has a $K_q^r$-decomposition $\off (B)$, and $B\cup S$ has a $K_q^r$-decomposition $\on (B)$ such that $S\not\in \on(B)$. 

Furthermore, we say $B$ is \emph{orthogonal} if for all distinct $e, f\in S$, we have $\on(B)[e]\ne \on(B)[f]$.
\end{definition}

\begin{definition}[Hinge]
Let $S$ and $S'$ be two $K_q^r$-cliques such that $S\cap S'=\{e\}$ for some $r$-edge $e$. A \emph{$K_q^r$-hinge} for $S$ and $S'$ is an $r$-graph $H$ such that $H$ is edge-disjoint from $S\cup S'$, $H\cup (S\setminus e)$ has a $K_q^r$-decomposition $\hleft(H)$ (the ``left" one), and $H\cup (S'\setminus e)$ has a $K_q^r$-decomposition $\hright(H)$ (the ``right" one). 

Furthermore, we say $H$ is \emph{independent} if $V(S)\cup V(S')$ is independent in $H$.
\end{definition}

We let $K_{q*n}^r$ denote the complete $q$-partite $r$-graph with $n$ vertices in each part (that is with vertex set $[q]\times [n]$ and edge-set consisting of all $r$-sets with at most one vertex in each part).

\begin{lem}\label{lem:Booster}
Let $q > r\ge 1$ be integers. If $S$ is a $K^r_q$-clique, then there exists a $K_q^r$-booster for $S$.
\end{lem}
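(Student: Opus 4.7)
My plan is to construct $B$ inside the complete $q$-partite $r$-graph $K^r_{q*n}$ for a prime power $n\ge q$, by identifying $S$ with the ``zero transversal'' and producing two Reed--Solomon-type $K^r_q$-decompositions of $K^r_{q*n}$: one containing $S$ and one that avoids $S$. Removing $S$ from the first yields $\off(B)$; the second is $\on(B)$.

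Concretely, I would let $n$ be a prime power with $n\ge q$ (for example the smallest prime $\ge q$, which exists by Bertrand's postulate), identify $V(S) = \{v_1,\ldots,v_q\}$ with $\{(i,0):i\in[q]\}\subseteq [q]\times \mathbb{F}_n$, and let $H := K^r_{q*n}$ on vertex set $[q]\times\mathbb{F}_n$, so that $S$ is the clique on the zero transversal and is trivially edge-disjoint from $B:=H\setminus S$. Fix distinct $\alpha_1,\ldots,\alpha_q\in\mathbb{F}_n$, and for each $\mathbf v\in\mathbb{F}_n^q$ write $C_{\mathbf v}$ for the $K^r_q$-clique on $\{(i,v_i):i\in[q]\}$. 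Let
\[
  \mc{D}_1 \;=\; \{\,C_{\mathbf v}\;:\;\mathbf v = (p(\alpha_1),\ldots,p(\alpha_q))\text{ for some }p\in\mathbb{F}_n[x] \text{ with } \deg p < r\,\}
\]
be the set of cliques indexed by Reed--Solomon codewords. Since a polynomial of degree less than $r$ is determined by its values at any $r$ of the $\alpha_i$ (Lagrange interpolation), every edge of $H$—which amounts to a choice of $R\in\binom{[q]}{r}$ together with values $y_i\in\mathbb{F}_n$ for $i\in R$—lies in exactly one $C_{\mathbf v}\in\mc{D}_1$. Thus $\mc{D}_1$ is a $K^r_q$-decomposition of $H$, and $S=C_{\mathbf 0}\in\mc{D}_1$ (taking $p\equiv 0$).

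Finally, because the code has only $n^r<n^q$ codewords (using $q>r$), there exists $\mathbf c\in\mathbb{F}_n^q$ that is not a codeword. The translation $(i,a)\mapsto(i,a+c_i)$ is an automorphism of $H$ that permutes vertices within each part, so $\mc{D}_2 := \{\,C_{\mathbf v + \mathbf c}:C_{\mathbf v}\in\mc{D}_1\,\}$ is also a $K^r_q$-decomposition of $H$. If $S=C_{\mathbf 0}$ lay in $\mc{D}_2$, then $-\mathbf c$ would be a codeword, and linearity of the Reed--Solomon code would force $\mathbf c$ itself to be a codeword, a contradiction; so $S\notin\mc{D}_2$. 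Setting $\off(B):=\mc{D}_1\setminus\{S\}$ and $\on(B):=\mc{D}_2$ produces the required booster. There is no real obstacle: the only nontrivial input is the MDS property of Reed--Solomon, a one-line consequence of Lagrange interpolation, and the rest is bookkeeping with the translation action of $\mathbb{F}_n^q$ on $H$.
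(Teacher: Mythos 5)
Your proposal is correct and takes essentially the same approach as the paper: both place $S$ inside $K_{q*n}^r$, set $B=K_{q*n}^r\setminus S$, and produce two $K_q^r$-decompositions of $K_{q*n}^r$ as parallel classes coming from cosets of an MDS code over $\mathbb{F}_n$, one containing $S$ (whose removal of $S$ gives $\off(B)$) and one avoiding $S$ (which gives $\on(B)$). The only differences are cosmetic: you use the Reed--Solomon evaluation/Lagrange-interpolation view and a translate by a non-codeword, while the paper uses the parity-check view via a Cauchy matrix (so it needs a prime $n>2q-r$ where you need only a prime power $n\ge q$); the underlying MDS property is the same.
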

\begin{proof}
Let $n$ be a prime such that $2q-r < n < 2(2q-r)$ (such exists by Bertrand's postulate, first proved in 1852 by Chebyshev~\cite{C52}). We may assume $S$ is a copy of $K_q^r$ in $K_{q*n}^r$. We prove that $K_{q*n}^r\setminus S$ is a $K_q^r$-booster for $S$. To that end, it suffices to show that $K_{q*n}^r$ admits two $K_q^r$-decompositions $\mc{Q}_1,\mc{Q}_2$ that do not share a clique as follows since then if $S\in \mc{Q}_1$, we have that $\mc{Q}_1\setminus S$ and $\mc{Q}_2$ are as desired. 

Let $M$ be a $(q-r)\times q$ Cauchy matrix over $\mathbb{F}_n$ (specifically, we can take $x_i := i$ for $i\in [q-r]$, $y_j := q-r+j$ for $j\in [q]$ and $M_{ij} := 1/(x_i-y_j)$ for $i\in [q-r], j\in [q]$). By construction, every submatrix of a Cauchy matrix is also a Cauchy matrix; a result of Cauchy~\cite{C1841} implies that every square Cauchy matrix is invertible. A vector $v\in \mathbb{F}_n^q$ naturally corresponds to a copy of $K_q^r$ in $K_{q*n}^r$. For $a \in \mathbb{F}_n^{q-r}$, the set of solutions $v\in \mathbb{F}_n^q$ of $Mv=a$ naturally corresponds to a $K_q^r$-decomposition $\mc{Q}_a$ of $K_{q*n}^r$ (since every $r$-set will be in exactly one such $K_q^r$, or equivalently exactly one such solution vector $v$, as every $(q-r)\times (q-r)$ submatrix of $M$ is invertible). Indeed, the family of decompositions $(\mc{Q}_a: a \in \mathbb{F}_n^{q-r})$ partition the $K_q^r$'s in $K_{q*n}^r$ into $K_q^r$-decompositions of $K_{q*n}^r$ (since for every vector $v$, there is exactly one $a$ such that $Mv=a$). Since there are at least two choices $a_1,a_2$ of $a$ as $n > 1$, there exist two decompositions $\mc{Q}_{a_1}, \mc{Q}_{a_2}$ of $K_{q*n}^r$ that do not share a clique.
\end{proof}

Now we can layer boosters (using our technique from~\cite{DKPIV,DPII}) to build an orthogonal booster as follows.

\begin{lem}\label{lem:OrthBooster}
Let $q > r\ge 1$ be integers. If $S$ is a $K^r_q$-clique, then there exists an orthogonal $K_q^r$-booster for $S$.   
\end{lem}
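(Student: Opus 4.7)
The plan is to build an orthogonal booster by iteratively \emph{layering} boosters from \cref{lem:Booster}. First I would start with a booster $B_0$ for $S$. Call a clique $T\in\on(B_0)$ \emph{bad} if $|T\cap S|\geq 2$, equivalently $|V(T)\cap V(S)|\geq r+1$ (since $T$ and $S$ are both $K_q^r$-cliques). I would measure non-orthogonality by $\ell(B):=\max\{|V(T)\cap V(S)|:T\in\on(B)\text{ bad}\}$, with the convention $\ell(B):=r$ when $B$ is orthogonal. Because $S\notin\on(B_0)$ and a $K_q^r$-clique is determined by its vertex set, $\ell(B_0)\leq q-1$. The aim is to drive $\ell$ down to $r$.

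The layering step: given a booster $B$ with $\ell(B)>r$, for each bad $T\in\on(B)$ I would apply \cref{lem:Booster} to obtain a booster $B_T$ for $T$, placed on a fresh vertex set meeting the prior construction only in $V(T)$. Setting $B':=B\cup\bigcup_T B_T$, $\on(B'):=(\on(B)\setminus\{T:T\text{ bad}\})\cup\bigcup_T\on(B_T)$, and $\off(B'):=\off(B)\cup\bigcup_T\off(B_T)$ gives valid on- and off-decompositions for $B'$. Since the $B_T$'s use pairwise disjoint fresh vertices, two distinct edges of $S$ share a clique of $\on(B')$ only if they were contained in the same bad $T$; hence every new bad clique $T'\in\on(B')$ lies in some $\on(B_T)$ and automatically satisfies $V(T')\cap V(S)\subseteq V(B_T)\cap V(S)=V(T)\cap V(S)$.

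The crux is to choose each $\on(B_T)=\mc{Q}_{a}$ from the family in \cref{lem:Booster} so that the strict inequality $|V(T')\cap V(S)|<|V(T)\cap V(S)|$ holds for every new bad $T'$. Set $k:=|V(T)\cap V(S)|$ and let $P\subseteq[q]$ index the parts of the ambient $K_{q*n}^r$ in which $V(T)\cap V(S)$ sits. The undesirable case $V(T')\cap V(S)=V(T)\cap V(S)$ with $T'\neq T$ corresponds to a vector $v\neq v_T$ in $\mathbb{F}_n^q$ with $v|_P=v_T|_P$, and this yields a clique of $\mc{Q}_a$ only when $a=Mv$. The set of such forced $a$'s is the image of the $(q-k)$-dimensional affine subspace $\{v:v|_P=v_T|_P\}$ under $M$; since the submatrix $M|_{[q-r]\times([q]\setminus P)}$ is itself a Cauchy matrix and therefore has rank $\min(q-r,q-k)=q-k$ (using $k\geq r+1$), this image contains exactly $n^{q-k}\leq n^{q-r-1}$ elements. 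Hence some $a\in\mathbb{F}_n^{q-r}$ avoids the image entirely (automatically avoiding $a_T$ too), and that is the choice I would make.

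With this choice of $a$ for every bad $T$, the resulting booster satisfies $\ell(B')\leq\ell(B)-1$, and iterating at most $q-1-r$ times yields an orthogonal booster. I expect the main obstacle to be the bookkeeping that $V(S)$ remains independent in $B'$ and that $S\notin\on(B')$ persists at every stage. Both properties should follow from the fresh-vertex setup: each $B_T$ is a copy of $K_{q*n}^r\setminus T$ with no edges inside $V(T)$ (so none inside $V(T)\cap V(S)$, and no vertex of $V(S)\setminus V(T)$ lies in $V(B_T)$), and since $V(T)\neq V(S)$ at every stage (the previous $\on$-decomposition never contains $S$), $V(S)\not\subseteq V(B_T)$, so $S$ cannot appear as a clique of $\on(B_T)$.
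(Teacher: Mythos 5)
Your proof is correct, and it shares the paper's central idea of layering a second booster from \cref{lem:Booster} on top of a clique of $\on(B)$ that contains two or more edges of $S$; but the two arguments differ in how they organize the induction and, more substantively, in how they justify the key improvement step. The paper takes an extremal booster $B$ maximizing the number of cliques of $\on(B)$ that meet $S$, replaces a single bad clique $Q$, and gets a contradiction; crucially, it uses \cref{lem:Booster} as a black box and ensures progress purely combinatorially, by noting that some $(r+1)$-subset $T_0\subseteq V(Q)$ is split across cliques of $\on(B^*)$ and then \emph{relabelling} so that $V(Q)\cap V(S)$ contains $T_0$ (hence is not inside a single clique of $\on(B^*)$). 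You instead run an explicit iteration with the decreasing potential $\max_T |V(T)\cap V(S)|$ over bad cliques, replacing all bad cliques per layer, and you guarantee the strict decrease by opening up the proof of \cref{lem:Booster}: among the Cauchy-matrix family $(\mc{Q}_a)_{a\in\mathbb{F}_n^{q-r}}$ you count that at most $n^{q-k}\le n^{q-r-1}$ values of $a$ produce a clique containing $V(T)\cap V(S)$, so a good $a$ exists (your rank computation and the observation that this choice also forces $T\notin\mc{Q}_a$ are both correct). So your argument is sound and even gives an explicit bound of at most $q-1-r$ layers, but it requires a strengthened form of \cref{lem:Booster} (access to the family $\mc{Q}_a$) rather than the lemma as stated; note that the paper's relabelling trick would also deliver exactly the property you need (no clique of $\on(B_T)$ containing $V(T)\cap V(S)$), which would let you keep \cref{lem:Booster} as a black box and drop the linear-algebra count.
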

\begin{proof}
Let $B$ be a $K_q^r$-booster for $S$ such that $i$, the number of cliques of $\on(B)$ that contain at least one edge of $S$, is maximized. Note such exists by Lemma~\ref{lem:Booster}; furthermore we have that $i\ge 2$ by definition of booster. If $i=\binom{q}{r}$, then $B$ is orthogonal as desired. 

So we assume $i < \binom{q}{r}$. Hence there exists a clique $Q\in \on(B)$ such that $Q$ contains at least two edges of $S$. Let $j:= |V(Q)\cap V(S)|$. Note $j\ge r+1$ since $Q$ contains at least two edges of $S$. Since $Q\ne S$, it follows that $j < q$. Let $B^*$ be a $K_q^r$ booster for $Q$ as given by Lemma~\ref{lem:Booster}.  By relabelling vertices, we assume without loss of generality that $V(B^*) \cap V(B) = V(Q)$ and in addition that $V(Q) \cap V(S)$ is not contained in a clique of $\on(B^*)$, as we now proceed to show is possible.

Since $\on(B^*)$ has at least two cliques that contain at least one edge of $Q$, there exists some subset $T_0\subseteq V(Q)$ with $|T_0|=r+1$ such that $T_0$ is not contained in a clique of $\on(B^*)$. Let $T\subseteq V(Q)$ with $T_0\subseteq T$ and $|T|=j$. Note then that $T$ is not contained in a clique of $\on(B^*)$ and hence letting $V(Q)\cap V(S)=T$ is as desired.

Let $B'=B\cup B^*$. Let $\on(B') := (\on(B)\setminus \{Q\})\cup\on(B^*)$ and $\off(B'):= \off(B)\cup \off(B^*)$. Note that $\on(B')$ is a $K_q^r$-decomposition of $B'\cup S$ and $\off(B')$ is a $K_q^r$-decomposition of $B'$. Moreover, the number of cliques of $\on(B')$ intersecting $S$ is at least $i+1$, and hence $B'$ contradicts the maximality of $B$.
\end{proof}

We note that if $B$ is an orthogonal $K_q^r$-booster for $S$ and $e\in S$ and we let $S'=\on(B)[e]$, then $H:=B\setminus (S'\setminus e)$ is a $K_q^r$-hinge for $S$ and $S'$ where $\hleft(H):= \on(B)\setminus \{S'\}$ and $\hright(H):= \off(B)$. 

\begin{lem}\label{lem:IndHinge}
Let $q > r\ge 1$ be integers. If $S_1$ and $S_2$ are $K^r_q$-cliques such that $S_1 \cap S_2 = \{e\}$ for some $r$-edge $e$, then there exists an independent $K_q^r$-hinge for $S_1$ and $S_2$.    
\end{lem}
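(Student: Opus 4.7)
The plan is to use the hinge from Lemma~\ref{lem:OrthBooster} (via the observation preceding this lemma) as a starting point and iteratively reduce the number of edges of the hinge that lie in $V_0 := V(S_1) \cup V(S_2)$; call such edges \emph{bad}. Concretely, take an orthogonal $K_q^r$-booster $B$ for $S_1$ from Lemma~\ref{lem:OrthBooster}, relabeled so that $\on(B)[e] = S_2$, and set $H_0 := B \setminus (S_2 \setminus e)$, which is a hinge for $S_1, S_2$. The goal is to show that any hinge $H$ for $S_1, S_2$ with $\operatorname{bad}(H) \ge 1$ admits a modification to a hinge $H'$ for $S_1, S_2$ with $\operatorname{bad}(H') = \operatorname{bad}(H) - 1$; iterating this finitely many times then yields an independent hinge.

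The reduction will use three applications of Lemma~\ref{lem:OrthBooster}. Pick a bad edge $f \in H$ and let $Q_L := \hleft(H)[f]$, $Q_R := \hright(H)[f]$. First, invoke Lemma~\ref{lem:OrthBooster} to obtain orthogonal boosters $B^*_L, B^*_R$ for $Q_L, Q_R$ respectively, with their non-$V(Q_\cdot)$ vertices mutually disjoint and disjoint from $V(H) \cup V_0$. Let $S^*_L := \on(B^*_L)[f]$ and $S^*_R := \on(B^*_R)[f]$; orthogonality forces $S^*_L \cap Q_L = S^*_R \cap Q_R = \{f\}$ and $V(S^*_L) \cap V_0 = V(S^*_R) \cap V_0 = V(f)$. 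Define $H'' := H \cup B^*_L \cup B^*_R$ with
\[
\hleft(H'') := (\hleft(H)\setminus\{Q_L\}) \cup \on(B^*_L) \cup \off(B^*_R), \quad \hright(H'') := (\hright(H)\setminus\{Q_R\}) \cup \off(B^*_L) \cup \on(B^*_R).
\]
Second, apply Lemma~\ref{lem:OrthBooster} once more to obtain an orthogonal booster $B^{***}$ for $S^*_L$, relabeling so that $\on(B^{***})[f] = S^*_R$ and $V(B^{***}) \setminus V(S^*_L)$ is otherwise disjoint from everything already constructed. The observation preceding this lemma then yields a hinge $H^{***}$ for $S^*_L, S^*_R$. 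Set
\[
H' := (H'' \setminus \{f\}) \cup H^{***}, \quad \hleft(H') := (\hleft(H'')\setminus\{S^*_L\}) \cup \hleft(H^{***}), \quad \hright(H') := (\hright(H'')\setminus\{S^*_R\}) \cup \hright(H^{***}).
\]

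Both the validity of these decompositions (as $K^r_q$-decompositions of $H'' \cup (S_i \setminus e)$ and $H' \cup (S_i \setminus e)$ for $i = 1,2$) and the identity $\operatorname{bad}(H') = \operatorname{bad}(H) - 1$ rest on the following \emph{key structural property} of Lemma~\ref{lem:OrthBooster}: an orthogonal booster produced for a clique $Q$ has no edge whose vertex set is contained in $V(Q)$. This is evident for the initial Cauchy booster $K_{q*n}^r \setminus Q$ of Lemma~\ref{lem:Booster} and persists through the layering in Lemma~\ref{lem:OrthBooster}, since each spliced-in auxiliary booster intersects the preexisting vertex set only in $V(Q_{\text{new}})$ and, by induction, contains no edges there. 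Combined with the facts that $Q_L \cap S_2 = Q_R \cap S_1 = \emptyset$ (so $|V(Q_L) \cap V(S_2)|, |V(Q_R) \cap V(S_1)| < r$), this property guarantees that $B^*_L, B^*_R$ and $H^{***}$ each contribute no edges inside $V_0$ and are edge-disjoint from the preexisting material. The main obstacle is thus the somewhat intricate edge-disjointness bookkeeping across the four decompositions $\hleft(H''), \hright(H''), \hleft(H'), \hright(H')$; once that is checked, $\operatorname{bad}(H') = \operatorname{bad}(H) - 1$ follows immediately because $B^*_L, B^*_R, H^{***}$ avoid $V_0$ and $f$ is removed.
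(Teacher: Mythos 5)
Your construction is correct, but it takes a genuinely different and substantially longer route than the paper. The paper gets independence in one step: it introduces a fresh intermediate clique $S$ with $S_1\cap S=S_2\cap S=\{e\}$, takes (via the observation before the lemma, with relabelling) a hinge $H_1$ for $S_1,S$ and a hinge $H_2$ for $S,S_2$ with $V(H_1)\cap V(H_2)=V(S)$, and sets $H:=H_1\cup (S\setminus e)\cup H_2$, $\hleft(H):=\hleft(H_1)\cup\hleft(H_2)$, $\hright(H):=\hright(H_1)\cup\hright(H_2)$; independence is then automatic because each $H_i$ meets $V(S_1)\cup V(S_2)$ only in $V(S_i)\cup V(e)$ and a booster has no edge inside the vertex set of its base clique. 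You instead start from the non-independent hinge $B\setminus(S_2\setminus e)$ and run an induction on the number of bad edges, spending two orthogonal boosters (for $Q_L=\hleft(H)[f]$ and $Q_R=\hright(H)[f]$) plus one hinge $H^{***}$ between $S^*_L$ and $S^*_R$ per bad edge $f$; I checked the deferred bookkeeping and it does go through: all edges of $B^*_L,B^*_R$ contain a fresh vertex, $H^{***}=B^{***}\setminus(S^*_R\setminus f)$ is edge-disjoint from everything previously built, and the four modified decompositions partition exactly $H''\cup(S_i\setminus e)$ and $H'\cup(S_i\setminus e)$. Two remarks: your ``key structural property'' needs no induction through the layering of \cref{lem:OrthBooster} --- since the base clique $Q$ is complete, edge-disjointness of the booster from $Q$ already means no edge lies inside $V(Q)$, exactly the fact the paper invokes when verifying the absorber --- and the observation that $|V(Q_L)\cap V(S_2)|<r$ is true but never actually needed, since freshness of the new vertices alone prevents new bad edges. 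What your approach buys is a general ``cleaning'' statement (any hinge can be converted into an independent one by local surgery), in the same layering spirit as \cref{lem:OrthBooster}; what it costs is an extra induction, three applications of \cref{lem:OrthBooster} per bad edge, and much heavier disjointness bookkeeping, where the paper's detour through a single fresh clique $S$ achieves independence for free.
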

\begin{proof} 
Let $S$ be a $K^r_q$-clique where $S_1 \cap S = S_2 \cap S = \{e\}$.
As mentioned above it follows from Lemma~\ref{lem:OrthBooster} that there exists a $K_q^r$-hinge $H_1$ for $S_1$ and $S$ and a $K_q^r$-hinge $H_2$ for $S$ and $S_2$.  By relabelling vertices, we assume without loss of generality that $V(H_1) \cap V(H_2) = V(S)$.
Let $H := H_1\cup (S\setminus \{e\}) \cup H_2$.  Let $\hleft(H) := \hleft(H_1)\cup \hleft(H_2)$ and $\hright(H) := \hright(H_1)\cup \hright(H_2)$. Then $H$ is an independent $K_q^r$-hinge for $S_1$ and $S_2$ as desired. 
\end{proof}

We need the following classical theorem on the existence of integral $K_q^r$-decompositions proved by Graver and Jurkat~\cite{GJ73} and independently Wilson~\cite{W73} in 1973 (whose shortest proof is at most a page say, for example see Section 7 of Keevash~\cite{K24}).  An \emph{integral $F$-decomposition} of an $r$-graph $G$ is an assignment of integers $w_Q$ to copies $Q$ of $F$ in $K_{V(G)}^r$ such that for every $e\in G$, $\sum_{Q\ni e} w_Q = +1$, and for every $e\in K_{V(G)}^r\setminus G$, $\sum_{Q\ni e} w_Q = 0$.

\begin{thm}[Graver and Jurkat~\cite{GJ73}, Wilson~\cite{W73}]\label{thm:Integral}
Let $q > r\ge 1$ be integers. If $L$ is a $K_q^r$-divisible $r$-graph with $|V(L)|\ge q+r$, then there exists an integral $K_q^r$-decomposition of $L$.
\end{thm}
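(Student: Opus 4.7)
The plan is to induct on the uniformity $r$. For the base case $r = 1$, a $K_q^1$-divisible $1$-graph $L$ is a vertex subset of cardinality divisible by $q$. Since $|V(L)| \geq q + 1$, for any two vertices $v, v'$ one may choose $q$-subsets $A, A'$ of $V(L)$ with $A \triangle A' = \{v, v'\}$, giving $\chi_v - \chi_{v'} = \chi_A - \chi_{A'}$ in the lattice spanned by $q$-subsets; combining such differences with one reference $q$-subset expresses every such $\chi_L$.

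For the inductive step, I would use a \emph{coning} construction. Fix $v \in V(L)$ and form the link $L_v := \{e \setminus \{v\} : v \in e \in L\}$, an $(r-1)$-graph on $V(L) \setminus \{v\}$. Applying the divisibility of $L$ to sets of the form $S' \cup \{v\}$ shows that $L_v$ is $K_{q-1}^{r-1}$-divisible, and since $|V(L) \setminus \{v\}| \geq (q-1) + (r-1)$, the inductive hypothesis yields an integral $K_{q-1}^{r-1}$-decomposition $\sum_Q w_Q \chi_Q$ of $L_v$ whose cliques $Q$ live on $V(L) \setminus \{v\}$. Coning each $Q$ at $v$ produces a $K_q^r$-clique $Q^v$ on $V(Q) \cup \{v\}$, and since $\sum_{Q \ni T} w_Q = \chi_{L_v}(T) = \chi_L(T \cup \{v\})$ for every $(r-1)$-set $T \subseteq V(L) \setminus \{v\}$, the combination $\sum_Q w_Q \chi_{Q^v}$ matches $\chi_L$ on every $r$-edge through $v$. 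Hence the residual $L^{(1)} := \chi_L - \sum_Q w_Q \chi_{Q^v}$ vanishes on all edges through $v$ and remains $K_q^r$-divisible as an integer-weighted $r$-graph.

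I would then iterate the procedure at vertices $v_2, v_3, \ldots \in V(L) \setminus \{v\}$, insisting at step $i$ on a $K_{q-1}^{r-1}$-decomposition whose cliques lie inside $V(L) \setminus \{v, v_2, \ldots, v_i\}$, so the newly subtracted $K_q^r$-cliques avoid the previously processed vertices and the residual continues to vanish on all edges through them. The iteration can proceed as long as the active vertex set has size at least $(q-1) + (r-1) = q + r - 2$; after $n - q - r + 2$ steps the residual is a $K_q^r$-divisible integer-weighted $r$-graph supported on only $q + r - 2$ vertices, where $n = |V(L)|$. \emph{The main obstacle} is closing the argument on this small residual: below the $|V(L)| \geq q + r$ threshold the inductive hypothesis no longer applies to its link, but the at least two fresh vertices outside the support (available because $|V(L)| \geq q + r$) should allow one to construct $K_q^r$-cliques straddling the support and its complement, implementing trades that express the small residual inside the $K_q^r$-lattice on $V(L)$. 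This final step is the genuine arithmetic heart of the theorem, and is precisely where the hypothesis $|V(L)| \geq q + r$ is used.
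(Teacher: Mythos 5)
First, a point of context: the paper does not prove this statement at all --- it is imported as a classical black box (Graver--Jurkat and Wilson), so there is no internal proof to compare against; your attempt has to stand on its own. Its skeleton is indeed the standard route to this theorem: induct on $r$, take the link at a vertex $v$, check that $K_q^r$-divisibility of $L$ at sets $S'\cup\{v\}$ gives $K_{q-1}^{r-1}$-divisibility of the link, cone an integral decomposition of the link back up, and iterate so that the residual vanishes on edges through the processed vertices. Two smaller issues with how you set this up: (i) after the first subtraction the residual is an integer-weighted $r$-graph, not an $r$-graph, so the statement you are inducting on must be strengthened to integer-weighted (equivalently, lattice) form --- the theorem as stated does not apply to the links of your residuals; (ii) your stopping condition is off by one (to process a vertex the active set must have at least $q+r-1$ vertices, so that its link has $q+r-2$), though your conclusion that the final residual sits on $q+r-2$ vertices is consistent with the correct threshold.

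The genuine gap is the one you flag yourself: the final residual, a $K_q^r$-divisible integer-weighted $r$-graph supported on $q+r-2$ vertices, need not be zero, and you give no argument that it lies in the lattice generated by $K_q^r$-cliques of $K_{V(L)}^r$. Saying that the two spare vertices ``should allow one to construct cliques straddling the support, implementing trades'' is not a proof step --- it is precisely the statement to be proved. Already for $r=1$ the residual can be, say, $q$ times a single vertex, and one needs an explicit identity in the lattice of $q$-subsets to absorb it; for general $r$ the analogous identities (the analysis of null designs/minimal trades, and the verification that divisibility is the only obstruction on few vertices) constitute the arithmetic core of Graver--Jurkat's and Wilson's arguments. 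Since your write-up defers exactly this step, the proposal reduces the theorem to its hardest part rather than proving it; as it stands it is an incomplete proof, not an alternative one.
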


We are now prepared to prove Theorem~\ref{thm:AbsorberExistence} as follows.

\begin{proof}[Proof of Theorem~\ref{thm:AbsorberExistence}]
We assume without loss of generality that $|V(L)|\ge q+r$ since it suffices to prove the theorem for this case (to see this, if $|V(L)| < q+r$, we let $L'$ be obtained from $L$ by adding isolated vertices such that $|V(L')|=q+r$; then we find a $K_q^r$-absorber $A$ for $L'$ which is then also a $K_q^r$-absorber for $L$). By Theorem~\ref{thm:Integral}, there exists an integral $K_q^r$-decomposition $\Phi$ of $L$ (with integers $w_Q$ for each $Q\in \binom {V(L)}{q}$). We view $\Phi$ as a multi-set of positive cliques $\Phi^+$ and a multi-set of negative cliques $\Phi^-$ (where each clique $Q$ appears with multiplicity $|w_Q|$). 

For each edge $e\in L$, let $M_e$ be a directed matching from the elements of $\Phi^-$ containing $e$ to all but one of the elements of $\Phi^+$ containing $e$. For each $r$-set $f$ in $\binom{V(L)}{r}\setminus L$, let $M_f$ be a directed matching of the elements from $\Phi^-$ containing $f$ to the elements of $\Phi^+$ containing $f$.  

We construct a $K_q^r$-absorber $A$ for $L$ as follows:
\begin{equation*}
    A := \left.\bigcup_{S \in \Phi^+ \cup \Phi^-}B_S\right. \cup \left.\bigcup_{f \in \binom{V(L)}{r},Q_1Q_2\in M_f}H_{f,Q_1,Q_2}\right.\text{\qquad where}
\end{equation*}
\begin{itemize}\itemsep.05in
    \item for each clique $S \in \Phi^+ \cup \Phi^-$, $B_S$ is an orthogonal $K_q^r$-booster for $S$ whose existence is guaranteed by Lemma~\ref{lem:OrthBooster}, and
    \item for each $f \in \binom{V(L)}{r}$ and each $Q_1Q_2 \in M_f$,  $H_{f,Q_1,Q_2}$ is an independent $K^r_q$-hinge for $\on(B_{Q_1})[f]$ and $\on(B_{Q_2})[f]$ whose existence is guaranteed by Lemma~\ref{lem:IndHinge} since 
     $\on(B_{Q_1})[f] \cap \on(B_{Q_2})[f] = \{f\}$ as $B_{Q_1}$ and $B_{Q_2}$ are orthogonal boosters. 
\end{itemize}

We assume without loss of generality that $V(B_S)\setminus V(S)$ is disjoint from every other booster. Similarly we assume without loss of generality that $V(H_{f,Q_1,Q_2})\setminus (\on(B_{Q_1})[f] \cup \on(B_{Q_2})[f])$ is disjoint from every other booster and hinge.
    
Now we proceed to verify that $A$ is a $K_q^r$-absorber for $L$ as follows. First we note that $V(L)$ is independent in $A$ and $A$ is simple (that is the boosters and hinges are all pairwise edge-disjoint). This follows from the definition of booster (namely that $V(S)$ is independent in $B$) and the fact that each hinge in the construction is an independent hinge. 

It remains to show that there exist $K_q^r$-decompositions $\mc{A}_1$ of $A$ and $\mc{A}_2$ of $A\cup L$ as follows:
\begin{align*} &\mc{A}_1: = \bigcup_{S\in \Phi^-} \bigg( \mc{O}{\rm n}(B_S)~\Big\backslash~~~~~\bigcup_{e\in S}~~~~~~\mc{O}{\rm n}(B_S)[e] \bigg) ~&\cup~&\bigcup_{f\in \binom{V(L)}{r}, Q_1Q_2\in M_f} \hleft(H_{f,Q_1,Q_2})~&\cup~&\bigcup_{S\in \Phi^+} \mc{O}{\rm ff}(B_S),\\
&\mc{A}_2 := \bigcup_{S\in \Phi^+} \bigg(\on(B_S)~\Big\backslash~\bigcup_{e\in S: S\in V(M_e)}\on(B_S)[e] \bigg)~&\cup~&\bigcup_{f\in \binom{V(L)}{r}, Q_1Q_2\in M_f} \hright(H_{f,Q_1,Q_2})~&\cup~&\bigcup_{S\in \Phi^-} \mc{O}{\rm ff}(B_S).\end{align*}
(Or in words: for $\mc{A}_1$, we use the \emph{on} decompositions of the negative clique boosters except we do not use the negative orthogonal cliques themselves; we use the \emph{left} decomposition of the hinges to decompose the edges of those unused cliques; finally we use the \emph{off} decompositions of the positive clique boosters.

For $\mc{A}_2$, we use the \emph{on} decompositions of the positive clique boosters except we do not use the positive orthogonal cliques themselves (except for those unmatched ones which will decompose the edges of $L$); we use the \emph{right} decomposition of the hinges to decompose the edges of those unused cliques; finally we use the \emph{off} decompositions of the negative clique boosters.)
\end{proof}

\bibliographystyle{plain}
\bibliography{ref2}

\end{document}